\providecommand{\SetLine}{\SetAlgoLined} 
\tikzset{middlearrow/.style={
        decoration={markings,
            mark= at position 0.5 with {\arrow{#1}} ,
        },
        postaction={decorate}
    }
}
\newcommand{\ov}[1]{\overline{#1}}
\newcommand{\invol}{\overline{\,^{\,^{\,}}}}
\newcommand{\ra}{\rightarrow}
\newcommand{\Ralg}{\ensuremath{\mathsf{REIN}}}
\theoremstyle{plain}
\newtheorem{thm}{Theorem}[section]
\newtheorem{lem}[thm]{Lemma}
\newtheorem{prop}[thm]{Proposition}
\theoremstyle{definition}
\newtheorem{example}[thm]{Example}
\title{2-manifold recognition is in logspace%
    \footnote{Supported by the Australian Research Council
    (project DP110101104).}}
\title{2-manifold recognition is in logspace}
\author[B.\,A. Burton]{Benjamin A.\ Burton}
\address{School of Mathematics and Physics, The University of Queensland, Brisbane QLD
4072, Australia}
\email{bab@maths.uq.edu.au}
\author[M. Elder]{Murray Elder}
  \address{School of Mathematical and Physical Sciences, The University of Newcastle, Callaghan NSW 2308, Australia}
\email{Murray.Elder@newcastle.edu.au}
\author[A. Kalka ]{Arkadius Kalka}
\address{Department of Mathematics, Bar Ilan University, Ramat Gan 52900, Israel}
\email{Arkadius.Kalka@rub.de}
\author[S. Tillmann]{Stephan Tillmann}
\address{School of Mathematics and Statistics, The University of Sydney, Sydney NSW 2006,
Australia}
\email{tillmann@maths.usyd.edu.au}
\thanks{Research supported by the Australian Research Council (project DP110101104)}
\keywords{logspace complexity; homeomorphism problem; 2-manifold classification}
\subjclass[2010]{57M99, 68Q15}
\date{\today} 
\keywords{logspace complexity; homeomorphism problem; 2-manifold classification}
\begin{document}

\maketitle

\begin{abstract}
We prove that the homeomorphism problem for 2--manifolds can be decided in logspace.
The proof relies on Reingold's logspace solution to the undirected $s,t$-connectivity problem in graphs.
\end{abstract}


\section{Introduction}

Two compact, connected surfaces with (possibly empty) boundary are
homeomorphic precisely when they have the same Euler characteristic, the
same number of boundary components and they are either both orientable
or both non-orientable. This triple of invariants leads to the
classification theorem for compact surfaces, which has roots in work of
Camille Jordan and August M\"obius in the 1860s, and Max Dehn and Poul
Heegaard in 1907, with the first rigorous proof due to Henry
Brahana~\cite{brahana-1921} in 1921 under the hypothesis that the
compact surfaces are triangulated; a modern proof with this hypothesis
is due to John Conway and presented by Francis and
Weeks~\cite{francis-ZIP}. The fact that every compact surface has a
triangulation, thus completing the classification theorem, was
established by Tibor Rad\'o~\cite{rado-triangulation} in 1925, and a
modern proof using the ``Kirby torus trick'' was recently given by Allen
Hatcher~\cite{hatcher-surfaces}.

Once a solution to a decision problem has been found, it is natural to
investigate its implementation and complexity. Lower dimensions are
often used to get a foothold into higher dimensions---for instance, many
algorithms for 3--dimensional manifolds follow Kneser's blueprint and
study them via surfaces embedded in them; the enumeration of
triangulated 4--manifolds requires us to recognise the 3--dimensional
sphere efficiently.

The main result of this paper asserts that given two finite
2--dimensional triangulations one can decide whether they
represent homeomorphic surfaces using space logarithmic in the size of
the triangulations. Letting $L$ be the class of problems that can be decided
in deterministic logspace, this is formally stated as:

\begin{thm} 
\label{Main}
{\sc 2-manifold Recognition} is in {\rm L}.
\end{thm}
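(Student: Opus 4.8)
The plan is to reduce {\sc 2-manifold Recognition} to polynomially many instances of \USTCON{} together with elementary arithmetic and bookkeeping, and then invoke Reingold's theorem that $\USTCON \in \mathrm{L}$. Since logspace computations compose (and $\mathrm{L}$ is closed under logspace many-one and Turing reductions), it suffices to exhibit a procedure that uses only $O(\log n)$ bits of workspace between and around \USTCON{} queries; queries will be regenerated on the fly rather than stored. Throughout, I interpret a triangulation of a surface as a finite list of triangles together with a partial matching of their $3n$ edge-sides into pairs (allowing self-gluings of a triangle), so that vertices and edges of the triangulation are equivalence classes of triangle-corners and triangle-edges under the identifications forced by the gluings.

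First I would show that the three classifying invariants of a compact surface are each logspace-computable from such a triangulation. The number of triangles $F$ and the number of glued edge-sides are read off directly, and the number of edges $E$ is then immediate; but the number of vertices $V$ is the number of equivalence classes of the $3n$ triangle-corners under the gluing identifications, i.e.\ the number of connected components of an explicitly logspace-constructible graph. Counting components reduces to $\mathrm{poly}(n)$ \USTCON{} queries: for each vertex $v$, test whether any $u<v$ is joined to it, and tally the components' least vertices. The Euler characteristic $\chi = V-E+F$ then follows by arithmetic on $O(\log n)$-bit numbers. The number of boundary components is the number of connected components of the $1$-complex formed by the unglued edge-sides (a disjoint union of circles), again a component count. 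For orientability I would build the orientation double cover as a graph whose vertices are the pairs $(T,\epsilon)$ with $T$ a triangle and $\epsilon$ one of its two orientations, joining $(T_1,\epsilon_1)$ to $(T_2,\epsilon_2)$ whenever a gluing identifies an edge of $T_1$ with an edge of $T_2$ and $\epsilon_1,\epsilon_2$ induce opposite orientations on that shared edge; a connected surface is orientable precisely when $(T_0,+)$ and $(T_0,-)$ lie in different components of this graph for a fixed triangle $T_0$, which is a single \USTCON{} query whose answer is negated.

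Next I would dispose of the hypotheses of the classification theorem recalled in the introduction. Connectedness of an input is one \USTCON{} query in the dual graph (triangles joined when they share an edge), and that the input is a genuine triangulated surface with (possibly empty) boundary can be certified in logspace by checking that every edge-side lies in at most one gluing and that every vertex link --- a graph of maximum degree two --- is a single cycle or a single path, which is a connectivity test; inputs failing these checks are declared not homeomorphic. With both surfaces known to be connected compact surfaces, the classification theorem reduces homeomorphism to equality of the three invariants above, a trivial comparison of $O(\log n)$-bit data. In the disconnected case one instead compares the multisets of invariant triples of the components: iterate over components $C$ of the first surface, compute $\chi$, the boundary count and orientability of $C$ (each a constant number of \USTCON{} calls restricted to the subgraph spanned by $C$), and for the lexicographically first component realising a given triple check that both surfaces contain equally many components with that triple, finishing with a comparison of total component counts. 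Every loop uses an $O(\log n)$-bit counter and every subroutine is logspace, so the whole algorithm runs in logspace.

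The main obstacle is not a single deep step but the uniform recognition that each ingredient needed for the classification theorem --- counting identified vertices, counting boundary circles, deciding orientability, testing connectedness, and certifying manifold-ness --- is a disguised graph-connectivity problem, so that Reingold's theorem applies throughout. The least routine point is setting up the orientation double cover so that ``orientable $\iff$ disconnected'' holds, with the edge identifications (including a triangle glued to itself) treated correctly, and then checking that the entire chain --- a composition of logspace reductions punctuated by logspace-bounded \USTCON{} calls --- genuinely stays within logspace.
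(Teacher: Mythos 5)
Your proposal is correct and takes essentially the same route as the paper: it reduces each classification invariant, and each input-validity check, to undirected connectivity in a logspace-constructible graph, invokes Reingold's theorem that \USTCON{} is in $\mathrm{L}$, and stitches the pieces together using closure of $\mathrm{L}$ under composition of logspace functions. Two details differ in presentation. For orientability you work directly with the face-dual graph of the orientation double cover (vertices $(T,\epsilon)$ joined along orientation-compatible gluings), whereas the paper literally builds the double-cover triangulation as a new gluing table of $2n$ triangles and reruns its component counter on that; these two graphs are the same. For boundary circles you build the boundary $1$-complex explicitly, joining unglued edge-sides that share a vertex-class; this requires deciding the vertex equivalence (already a \USTCON{} subproblem) \emph{inside} another component count, i.e.\ you nest one \USTCON{} oracle inside another. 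The paper avoids the nesting with a small trick: it forms a graph $K'$ from the vertex identification graph $K$ by adding one extra edge $(w_{t,i},w_{t,j})$ for each unglued side $(ij)$ of triangle $t$, and then uses the formula $b = k' - k + x$ where $k,k'$ are the component counts of $K,K'$ and $x$ is the number of unglued sides. Both are valid logspace algorithms; yours is the more direct picture, the paper's is a single layer of $\Ralg$. One small correction: demanding that every vertex link be a single path or cycle is not a real check here, since a vertex-class is \emph{by definition} a set of corners joined through a chain of gluings, and that chain is exactly a path in the link, so links are automatically connected. The genuine well-formedness tests the paper performs in its place are that the gluing table is self-consistent (the two table cells encoding each gluing agree, and no cell simultaneously claims to be boundary and glued) and that no triangle edge is glued to itself; your ``partial matching of edge-sides'' phrasing presupposes these, but they must be verified from the raw table.
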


The definitions of ``{\sc 2-manifold Recognition}'' and
``\emph{deterministic logspace}'' are given in \S\ref{sec:part1}. We
remark that while \emph{a priori} a logspace algorithm has no time
bound, it is easy to show that logspace algorithms run in polynomial
time (see for example Lemma 4 in \cite{EEO}).

There is a remarkable gap between surfaces and higher dimensional
manifolds. Manifolds of dimension three were shown to be triangulated by
Moise~\cite{moise} in 1952, and an excellent discussion of the current
state of the solution of the homeomorphism problem can be found in the
recent survey by Matthias Aschenbrenner, Stefan Friedl and Henry
Wilton~\cite{AFW-survey}. In a nutshell, the homeomorphism problem for
oriented 3--manifolds has been solved modulo the existence of an
algorithm that determines whether an oriented 3--manifold has an
orientation reversing involution. Many important algorithms for
3--manifolds have been implemented~\cite{Regina}, and many important
decision problems, such as unknot recognition~\cite{HLP} and 3--sphere
recognition~\cite{schleimer}, have been shown to be in the complexity
class {\rm NP}.

The next qualitative gap arises between dimensions three and higher.
There are compact 4--dimensional manifolds, such as the $E_8$ manifold
discovered by Mike Freedman in 1982, that are not homeomorphic to any
simplicial complex. Ciprian Manolescu~\cite{manolescu} has recently
announced that there are also such examples in dimensions $n\ge 5.$ Even
if one restricts to compact, simplicial manifolds of dimension $n\ge 4,$
the homeomorphism problem was shown to be undecidable by
Markov~\cite{markov} in 1958 as a consequence of the unsolvability of
the isomorphism problem for finitely presented groups, which is due to
Adyan~\cite{Ad57b, Ad57a} and Rabin\cite{Rab58}. In particular, for the
development of algorithms in higher dimensions one needs to restrict to
special classes of manifolds, or else be content with heuristic
methods.\\

We now give an informal description of our logspace algorithms.
The starting point is a logspace algorithm which, given a single triangulation
as input:
\begin{enumerate}\item 
Checks that the triangulation is a 2-manifold.
\item Counts the number of connected components, $c.$
\item If the input is a connected 2-manifold:
\begin{itemize}
\item decides if it is orientable or non-orientable;
\item computes the Euler characteristic, $\chi$;
\item counts the number of boundary components, $b$.
\end{itemize}
\item If there is more than one connected component, the algorithm outputs
the following data: $(o_1,\chi_1,b_1), \dots, (o_c,\chi_c,b_c) $ where
$o_i=0$ if the $i$-th connected component is  orientable and 1 otherwise,
$\chi_i$ is its Euler characteristic, and  $b_i$ is its number of
boundary components.
Moreover, the algorithm outputs this data in the following order:
\begin{itemize}
\item $o_i<o_{i+1}$, or 
\item $o_i=o_{i+1}$ and $\chi_i< \chi_{i+1}$, or
\item $o_i=o_{i+1},\chi_i =\chi_{i+1}$, and $b_i\leq b_{i+1}$.
\end{itemize}
\end{enumerate}

This output is a complete invariant of the homeomorphism type of the
2-manifold, and so the solution to the homeomorphism problem then follows
by running this algorithm simultaneously on two triangulations.

This paper is organised as follows. We give precise definitions of the
complexity class and data structures we use in \S\ref{sec:part1}. The
algorithms to verify that an input triangulation represents a surface and
count the number of components are described in
\S\ref{sec:algpartone}. The algorithm to compute the complete invariants
of a connected, triangulated surface is given in \S\ref{sec:part2},
and this algorithm is then applied in \S\ref{sec:moreconn} to compute the
invariants of each connected component of a disconnected surface.


\section{Preliminaries}\label{sec:part1}
A {\em deterministic logspace transducer} consists of a finite state
control, a read-head, and three tapes: 
\begin{enumerate}
\item the {\em input tape} is read-only, and
stores the input string; 
\item the {\em work tape} is read-write, but is
restricted to using at most $c\log n$ squares, where $n$ is the length
of the word on the input tape and $c$ is a fixed constant; and 
\item the {\em output tape} is write-only, and is restricted to writing left
to right only. The space used on the output tape is not added to the space
bounds.
\end{enumerate}

A transition of the  transducer takes as input a letter of the input
tape at the position of the read-head, a state of the finite state
control, and a letter on the work-tape. On each transition the transducer
can modify the work tape, change states, and write at most a fixed
constant number of letters to the output tape, moving to the right along the
output tape for each letter printed.

Since the position of the read-head of the input tape is an integer
between 1 and $n$ (the length of the input), we can store it in  binary
on the work tape. In addition we can store a finite number of {\em
pointers} to positions on the input tape.

A problem is in deterministic logspace if it can be decided using a
deterministic logspace transducer.  Since all transducers in this
article will be determistic, we will say logspace for deterministic
logspace throughout.

A key property of logspace transducers is that they can be composed together
to give new logspace transducers. Formally, 
let $X,Y$ be finite alphabets, and let $X^*$ denote the set of all finite
length strings in the letters of $X$. 
We call $f:X^*\rightarrow Y^*$ a {\em logspace computable function}  
if there is a   logspace transducer that on input $w\in X^*$ computes $f(w)$. 
\begin{lem}[Lemma 2 in \cite{EEO}]
\label{lem:comp}
If $f, g: X^* \rightarrow X^*$ can both be computed in logspace,
then their composition $f \circ g: X^* \rightarrow X^*$ can also be
computed in logspace.
\end{lem}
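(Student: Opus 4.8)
The plan is to use the standard re-computation trick for composing space-bounded transducers. The obvious approach --- run $M_g$ on $w$, store $g(w)$ on the work tape, then run $M_f$ on it --- fails, because $g(w)$ can be polynomially long and hence far too large to fit in a logspace work tape. Instead we treat $g(w)$ as a \emph{virtual input tape} for $M_f$ that is never actually written down, and recompute its symbols on demand.

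Concretely, fix logspace transducers $M_g$ and $M_f$ computing $g$ and $f$. I would build a transducer $M$ that simulates the computation of $M_f$ step by step, storing on its work tape: (i) the current contents of $M_f$'s work tape and the state of its finite control; (ii) a binary counter $p$ holding the position of $M_f$'s read head on the virtual tape $g(w)$ --- this counter may increase or decrease as $M_f$'s head moves, but always stays between $0$ and $|g(w)|+O(1)$; and (iii) scratch space for the subroutine described next. Each time the simulation needs the symbol of $g(w)$ at position $p$ --- or needs to detect that position $p$ lies past the end of $g(w)$ --- the machine $M$ runs $M_g$ afresh on the genuine input $w$, maintaining a second binary counter that is incremented on each symbol $M_g$ emits to its (virtual) output tape; when that counter reaches $p$ the subroutine returns the symbol just produced, and if $M_g$ halts first it returns the end-of-input marker. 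The subroutine uses only a copy of $M_g$'s work tape together with this output counter, and this space is \emph{reused}, i.e.\ cleared before each invocation rather than accumulated. Since $M$ reproduces the run of $M_f$ on input $g(w)$ exactly, it outputs $f(g(w))$, which gives correctness.

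It remains to check the space bound, which is the one point that needs the right observation. By the remark above (cf.\ Lemma 4 in \cite{EEO}), a logspace transducer runs in polynomial time, so $|g(w)| \le |w|^{d}$ for some constant $d$ depending only on $M_g$; consequently $\log |g(w)| = O(\log |w|)$. Hence the counters in (ii) and in the subroutine, both bounded by $|g(w)|+O(1)$, occupy only $O(\log |w|)$ bits; the simulated work tape of $M_f$ uses $O(\log |g(w)|) = O(\log |w|)$ cells; and the simulated work tape of $M_g$ uses $O(\log |w|)$ cells. Because the subroutine's workspace is recycled on every call rather than retained, these contributions \emph{add} rather than multiply, and $M$ runs in space $O(\log |w|)$. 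I expect the only real obstacle to be the bookkeeping: one must interleave the re-invocations of $M_g$ cleanly with the step-by-step simulation of $M_f$, and make sure each invocation starts from a properly reset workspace, so that the (polynomially many) virtual reads performed by $M_f$ do not blow up the space budget.
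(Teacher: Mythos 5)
Your proof is correct and follows essentially the same approach as the paper: simulate $M_f$ while treating $g(w)$ as a virtual input tape, recomputing the $j$-th symbol of $g(w)$ on demand by re-running $M_g$ from scratch with an output counter. The paper's version of this proof is terser and leaves implicit the space accounting (in particular the polynomial bound on $|g(w)|$ needed to justify the logarithmic size of the position counter), which you have spelled out carefully.
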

\begin{proof} 
Let $M_f, M_g$ be logspace transducers that compute $f$ and $g$ respectively.
On input $w\in X^*$,
run $M_f$. Each time $M_f$ calls for the $j$th input letter, run $M_g$ on $w$;
however, instead of writing the output of $M_g$ to a tape, we add 1 to a
counter (in binary) each time $M_g$ would normally write a letter.
Continue running $M_g$ until the counter has value
$j-1$, at which point we return the next letter that $M_g$ would output
back to $M_f$.
\end{proof}

Finally, a \emph{logspace algorithm} is an algorithm that runs on a
logspace transducer.  Lemma~\ref{lem:comp} implies that a logspace
algorithm may assume that its input is the output of some other logspace
algorithm.

\bigskip

In this article we show that the following decision problem is in logspace:\\

\begin{tabular}{l l}
\emph{Problem:} & {\sc 2-manifold Recognition}\\
\emph{Instance:} & Two 2--dimensional triangulations
$\mathcal T_1,\mathcal T_2$\\
\emph{Question:} & Do $\mathcal T_1$ and $\mathcal T_2$ represent homeomorphic 2--manifolds?
\end{tabular}\\

For a positive integer $n$ let $[n]$ denote the set $\{1, \ldots ,n\}$.
A triangulation $\mathcal T$ is specified by  a list of $n$ triangles,
where each triangle $t\in[n]$  has vertices labeled $1,2,3$ (which
induces an orientation on the triangle), and edges glued according to a
table  as follows:
\[
\begin{array}{c|c|c|c}
& (12) & (23) & (31)\\
\hline 
1 & a_1& b_1 & c_1\\
2 & a_2 & b_2 & c_2\\
\vdots &&&\\
n & a_n &  b_n &  c_n\\
\end{array}\]
with
\[a_t, b_t, c_t\in \{\emptyset\}  \cup  \{(s,e) \mid s\in[n],e\in \{(12),(21),(23),(32), (31),(13)\}\}.\]
The entry $a_t=(s,e)$ in row $t$ column $(12)$ means that the edge
$(12)$ in $t$ is glued to the edge $e$ in triangle $s$, whereas
$a_t=\emptyset$ means that the edge $(12)$ in $t$ is not glued to anything
(i.e., it is a boundary edge);
likewise for columns $(23)$ and $(31)$.

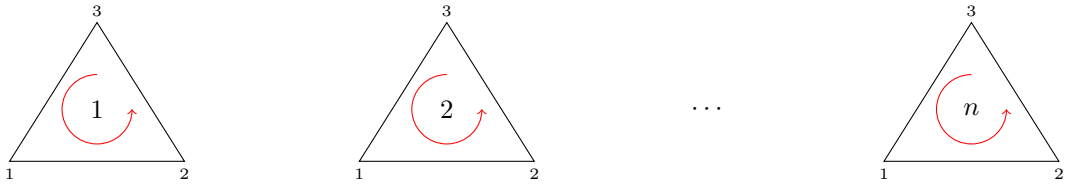
\begin{figure}[h]
\begin{center}
\begin{tikzpicture}[scale=2.3]

\draw  (0,0) -- (1,0) -- (.5,.8) -- cycle;
\draw  (2,0) -- (3,0) -- (2.5,.8) -- cycle;
\draw  (5,0) -- (6,0) -- (5.5,.8) -- cycle;

\tiny
\node[align=left, below] at (0,0) {$1$};
\node[align=left, below] at (2,0) {$1$};
\node[align=left, below] at (5,0) {$1$};
\node[align=right, below] at (1,0) {$2$};
\node[align=right, below] at (3,0) {$2$};
\node[align=right, below] at (6,0) {$2$};
\node[align=center, above] at (.5,.8) {$3$};
\node[align=center, above] at (2.5,.8) {$3$};
\node[align=center, above] at (5.5,.8) {$3$};

\normalsize

\node[align=center] at (.5,.3) {$1$};
\node[align=center] at (2.5,.3) {$2$};
\node[align=center] at (5.5,.3) {$n$};

\node[align=center] at (4,.3) {$\cdots$};

 \draw[red, ->] (.5,.5)   arc (90:360:2mm);
 \draw[red, ->] (2.5,.5)   arc (90:360:2mm);
  \draw[red, ->] (5.5,.5)   arc (90:360:2mm);
\end{tikzpicture}
\caption{Input triangles}
\label{fig:}\end{center}
\end{figure}

A triangulation is given to a logspace transducer by writing the string
\[ \# \ \  a_1\  \ b_1  \  \ c_1 \ \ \# \  \ a_2  \  \ b_2 \  \ c_2  \ \ \# \ \  \dots  \  \ \# \ \  a_n  \  \ b_n  \  \ c_n \] 
on the input tape 
using the alphabet $\{\#,  \emptyset, 0,1,   (12),(23),(31),(21),(32),(13)\}$  
where $a_i,b_i,c_i$ are written as either $\emptyset$ or a binary number followed by $(12),(23)$ or $(31)$.  

\begin{example}\label{eg:Klein}
Figure~\ref{fig:K triangulation} illustrates a triangulation of a
punctured Klein bottle, and Table~\ref{table:klein} shows the
corresponding table of edge gluings.
For this triangulation, the input tape of our logspace transducer would
read as follows:
\[\# \ 10 \ (13)  \  11 \ (12) \  11 \ (32) \ 
\# \ 11 \ (13) \  \emptyset \ 1 \ (21) \ 
\# \ 1 \ (23) \ 1 \  (13) \  10 \ (21) 
\]

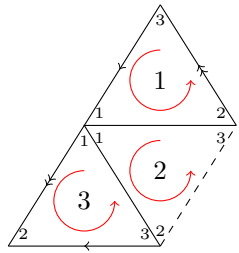
\begin{figure}[h]\label{fig:klein}
\begin{center}
\begin{tikzpicture}[scale=2]

\draw[middlearrow={<}] (0,0) --  (.5,.8);
\draw[middlearrow={>}] (.5,-.8) -- (-.5,-.8);

\draw[middlearrow={>>}] (0,0) --  (-.5,-.8);
\draw[middlearrow={>>}] (1,0) -- (.5,.8);

\draw  (1,0) -- (0,0) -- (.5,-.8);

\draw[dashed] (.5,-.8) -- (1,0);  

\node[align=center] at (.5,.3) {$1$};
\node[align=center] at (.5,-.3)  {$2$};
\node[align=center] at (0,-.5)  {$3$};

\tiny
\node[align=right, above] at (.1,0) {$1$};
\node[align=left, above] at (.9,0) {$2$};
\node[align=center] at (.5,.7) {$3$};

\node[align=right, below] at (.1,0) {$1$};
\node[align=center] at (.5,-.7) {$2$};
\node[align=left, below] at (.9,0) {$3$};

\node[align=center] at (0,-.1) {$1$};
\node[align=right, above] at (-.4,-.8) {$2$};
\node[align=left, above] at (.4,-.8) {$3$};

 \draw[red, ->] (.5,.5)   arc (90:360:2mm);
 \draw[red, ->] (0,-.3)   arc (90:360:2mm);
 \draw[red, ->] (.5,-.1)   arc (90:360:2mm); 

\end{tikzpicture}
\caption{Triangles for Example~\ref{eg:Klein} with edges identified. Dashed line is the boundary.}
\label{fig:K triangulation}\end{center}
\end{figure}

\begin{table}[h]\caption{Input table for Example~\ref{eg:Klein}}\label{table:klein}
$\begin{array}{c|c|c|c}
& (12) & (23) & (31)\\
\hline 
1 & 2,(13) & 3,(12) & 3,(32)\\
2 & 3,(13) & \emptyset & 1,(21) \\
3 & 1,(23) & 1,(13) & 2,(21)\\
\end{array}$
\end{table}

\end{example}

A triangulation of $n$ triangles has input size $N\in O(n\log n)$.  We
will prove that the data required to identify the homeomorphism type of
the input can be output by a  transducer using  $O(\log N)$ squares of
the work tape. It follows that on input a triangulation with $n$
triangles, the homeomorphism type can be computed using $O(\log
N)=O(\log n+\log\log n)=O(\log n)$ space. 

It can easily be checked in logspace that the input is written in the
required form -- the number $n$ of $\#$ symbols can be computed and
written in binary on the work tape by scanning  $\#$ symbols, then one
can check that  each binary number on the tape has value  between $1$
and $n$.  So we may assume the input is correctly specified. However, we
do not assume  that the gluing instructions are consistent or give a
manifold. For example, we may have $\emptyset$ in row $t$ column $(12)$
but $(t,(12))$ may appear as a different entry in the table, which would be
inconsistent.  The algorithm we describe will check this.

Note that the set $\epsilon=\{(12),(21),(23),(32), (31),(13)\}$ comes natually
equipped with an involution $\invol:\epsilon\ra \epsilon$
given by $\ov {(ij)}=(ji)$.

When describing our algorithms we will refer to {\em row $t$ column $e$}
of the input tape, which means the entry in row $t$ column $e$ of the
gluing table.  This can be located in logspace by  scaning the input
tape from left to right counting the number of  $\#$ symbols.

Throughout this paper we make use of a deterministic logspace algorithm
due to Reingold \cite{Rein} which takes input $(V,E,s,t),$ where $(V,E)$
is an undirected graph,  $s,t\in V$, and returns {\em Yes} if there is
an edge path from $s$ to $t$, and {\em No} otherwise.  We call this
algorithm $\Ralg$.

We present the algorithms in this paper using pseudocode.
Note that for-loops in the pseudocode are straightforward to implement
a logspace transducer, using a binary number on the work tape for each loop.
All of our algorithms make implicit use of the fact that
logspace functions are closed under composition
(Lemma~\ref{lem:comp}).


\section{Initial tests}\label{sec:algpartone}

\subsection{Counting components of a graph}

We begin with a simple tool that we call upon repeatedly in this paper:
a logspace algorithm to compute the number of connected components of an
undirected graph.
This algorithm follows immediately from $\Ralg$.  It operates as
follows; see Algorithm~\ref{alg:connected} for the pseudocode.

Assume the graph is
written on the input tape with vertices $[n]$ and edges given as a list
$E\subseteq [n]\times [n]$.
Initialise a counter $c=1$ for the component containing vertex~1.
The key idea is to iterate through the remaining vertices, and to increment $c$
each time we encounter the lowest-numbered vertex of some connected component.

More precisely:
Algorithm~\ref{alg:connected} runs through each vertex $t>1$,
calling $\Ralg$ to test whether $t$ is connected to any vertex $s<t$. If
it is, we leave $c$ unchanged and move to the next vertex. If it is
not, we increment $c$ and move to the next vertex.

\begin{algorithm}[h]
\caption{Count connected components.}
\label{alg:connected}
\SetLine
\KwIn{Undirected graph $([n], E)$.}
\KwOut{Number of connected components, $c$.}
Write a counter $c=1$ (in binary) to the work tape\;
\For{$t=2 \,\, {\tt to} \,\, n$}{
   Set $b={\tt false}$\;
   \For{$s=1 \,\, {\tt to} \,\, t-1$}{
        Run $\Ralg$ on  $([n], E, s,t)$. 
        If $\Ralg$ returns ${\tt true}$, set $b={\tt true}$\;
   }
   If  $b={\tt false}$, increment $c$ by 1\;
}
\Return{$c$\;}
\end{algorithm}

\subsection{Checking the input is a surface}

Our first task is to test the validity of the input.
Algorithm~\ref{alg:checksurface} decides whether the input represents a
surface by enumerating through  $(t,e)$ for
each $t\in[n],e\in \{(12),(23),(31)\}$ 
and checking: \begin{enumerate}
\item that the entry $(t,e)$ or $(t,\ov e)$ appears at most once in the table;
\item if row $t$ column $e$ of the table  is $\emptyset$, 
that neither $(t,e)$ nor $(t,\ov e)$  appear in the table;
\item if row $t$ column $e$ of the table  is $(s,f)$
for $f\in \{(12),(23),(31)\}$, that row $s$ column $f$ is $(t, e)$;
\item if row $t$ column $e$ of the table  is $(s,f)$
for $f\in \{(21),(32),(13)\}$, that row $s$ column $\ov f$ is $(t, \ov e)$;
\item that row $t$ column $e$ of the table is not $(t,e)$ or $(t,\ov e)$.
\end{enumerate}

\begin{algorithm}[h]
\caption{Check surface.}
\label{alg:checksurface}
\SetLine
\KwIn{Triangulation data on input tape.}
\KwOut{{\em Yes} if input is a surface, {\em No} otherwise.}
\For{$t\in [n]$}{
   \For{$e\in \{(12),(23),(31)\}$}{
        Write a counter $c=0$ to the work tape\;
        Scan the tape from left to right reading each entry $(s,f)$\;
        \Indp
        If $(s,f)\in\{(t,e),(t,\ov e)\}$, increment $c$ by 1\;
        If $c>1$,  output  {\em No} and stop\;
        \Indm
        Read the entry $y=(s,f)$ in row $t$ column $e$\;     
        If $y=\emptyset$ and $c\neq 0$, output  {\em No} and stop\;
        If $f\in \{(12),(23),(31)\}$, read the entry $z$ in row $s$ column $f$. If  $z \ne (t,e)$, output  {\em No} and stop\;
        If $f\in \{(21),(32),(13)\}$, read the entry $z$ in row $s$ column $\ov f$. If  $z \ne (t,\ov e)$, output  {\em No} and stop\;
        If $y\in \{(t,e),(t,\ov e)\}$, output  {\em No} and stop\;
   }
}
If {\em No} not printed,  return {\em Yes}\;
\end{algorithm}

\subsection{Counting the number of connected components}\label{subsec:connectedcomp}

Next, we count the number of connected components of the input surface.
To do this we construct the \emph{face-dual graph} of the surface,
which is an undirected graph whose vertices correspond to the triangles
of the surface,
and whose edges correspond to triangle gluings.
More precisely, the vertices of the face-dual graph are $[n]$, and the edges
of the face-dual graph are pairs
$(s,t)$ for which $(t,e)$ is in row $s$ of the table
for some $e\in \{(12),(23),(31),(21),(32),(13)\}$
(and therefore $(s,e)$ appears in row $t$ for some $e$ also).

Note that the face-dual graph as defined here is a simple graph:
it does not include loops or parallel edges
(which do not affect connectivity).
As an example, the face-dual graph for the punctured Klein bottle
of Example~\ref{eg:Klein} has vertices $\{1,2,3\}$, and edges
$\{1,2\}$, $\{2,3\}$, $\{1,3\}$.

Algorithm~\ref{alg:facedual} takes triangulation data as input
and outputs the face-dual graph as an undirected graph $([n],E')$,
using a simple scan through the table.

\begin{algorithm}[h]
\caption{Construct the face-dual graph.}
\label{alg:facedual}
\SetLine
\KwIn{Triangulation data on input tape.}
\KwOut{Face-dual graph $([n], E')$.}
Scan the tape counting $\#$ symbols in binary on the work tape, then store this number $n$ and write $[n]$ to the output tape\;
\For{$t\in[n]$}{
   \For{$s=t+1,\dots, n$}{
        Check whether $(t,e)$ is in row $s$ of the table
         for some $e\in \{(12),(21),(23),(32), (31),(13)\}$.
        If true, write  $(s,t)$ to the output tape\;
   }
}
\end{algorithm}

To count components of the input triangulation in logspace,
we use Algorithm~\ref{alg:facedual} to construct the face-dual graph,
and we count components using
Algorithm~\ref{alg:connected} with this face-dual graph as input.
Call the composition of these algorithms
\emph{Algorithm~A}.


\section{Algorithm for one connected component}\label{sec:part2}

In this section we assume the input surface is connected and compute its
homeomorphism type. In the next section we extend this to surfaces with
more than one connected component.


\subsection{Orientability}

We can determine whether or not a manifold is orientable by taking its
double cover, which is connected  if the manifold is non-orientable, and
which has two components if the manifold is orientable.

Recall that each
triangle has a fixed orientation determined by the corner labels
$1,2,3$.  The double cover is given by  a set of $2n$ triangles $\{t,t' \mid
t\in[n]\}$ with a gluing table constructed from the original table as
follows:
\begin{enumerate}
\item the rows of the table are  $\{t,t' \mid t\in[n]\}$ and the
columns  are $\{(12),(23),(31)\}$;
\item if row $t$ column $e$ of the original table contains $\emptyset$,
then write $\emptyset$ in rows $t,t'$ column $e$ of the new table;
\item if row $t$ column $e$ of the original table contains $(s,f)$ with
$f\in \{(12),(23),(31)\}$, then write $(s',f)$ in row $t$ column $e$ and
$(s,f)$ in row $t'$ column $e$  of the new table;
\item if row $t$ column $e$ of the original table contains $(s,f)$ with
$f\in \{(21),(32),(13)\}$, then write $(s,f)$ in row $t$ column $e$ and
$(s',f)$ in row $t'$ column $e$  of the new table.
\end{enumerate}
Table~\ref{table:doublecoverKlein} illustrates the double cover of the
punctured Klein bottle from Example~\ref{eg:Klein}.

\begin{table}[h]
\caption{Double cover data for Example~\ref{eg:Klein}}
\label{table:doublecoverKlein}
$\begin{array}{c|c|c|c}
&  (12)& (23) &(31) \\
\hline 
1 & 2,(13) & 3',(12) & 3,(32) \\
2 & 3,(13) & \emptyset & 1,(21) \\
3 & 1',(23) & 1,(13) & 2,(21)\\
1' & 2',(13) & 3,(12) & 3',(32) \\
2' & 3',(13) & \emptyset & 1',(21) \\
3' & 1,(23) & 1',(13) & 2',(21)\\
\end{array}$
\end{table}

We can easily describe a logspace algorithm to produce this double cover
gluing table from
the original input; see Algorithm~\ref{alg:double} for the details.
To test the orientability of the original input triangulation, we now
compose this with Algorithm~A from Section~\ref{subsec:connectedcomp}:
Algorithm~\ref{alg:double} constructs the double cover, and
Algorithm~A tests whether the double cover has one or two components.

\begin{algorithm}[h]
\caption{Construct the double cover of a triangulation.}
\label{alg:double}
\SetLine
\KwIn{Triangulation data on input tape.}
\KwOut{Triangulation data for the double cover, with triangles ordered as $1,\dots, n, 1',\dots, n'$.}
\For{$t\in [n]$}{
   Write $\#$ to the output tape\;
   \For{$e\in\{(12),(23),(31)\}$}{
       If row $t$ column $e$ of the input table is $\emptyset$, write $\emptyset $ to the output tape\;
        If row $t$ column $e$ of the input table is $(s,f)$ with   $f\in \{(12),(23),(31)\}$, write $s' \ f$ to the output tape\;
     If row $t$ column $e$ of the input table is $(s,f)$ with   $f\in \{(21),(32),(13)\}$, write $s \ f$ to the output tape\;
   }
}
\For{$t\in[n]$}{
   Write $\#$ to the output tape\;
   \For{$e\in\{(12),(23),(31)\}$}{
       If row $t$ column $e$ of the input table is $\emptyset$, write $\emptyset $ to the output tape\;
        If row $t$ column $e$ of the input table is $(s,f)$ with   $f\in \{(12),(23),(31)\}$, write $s \ f$ to the output tape\;
     If row $t$ column $e$ of the input table is $(s,f)$ with   $f\in \{(21),(32),(13)\}$, write $s' \ f$ to the output tape\;
   }
}
\end{algorithm}


\subsection{Euler characterisitic}\label{subsec:euler}

For a triangulation of a surface $S$ we have $\chi(S)=|V|-|E|+n$, where
$V$ and $E$ are the vertex set and edge set of $S$ respectively,
and where $n$ is the number of triangles.

Let $x$ be the number of edges of triangles that are not glued to any
other edge, i.e., the number of $\emptyset$ symbols on the input tape.
Then the number of edges is $|E| = (3n-x)/2+x=(3n+x)/2$,
since the remaining $3n-x$ triangle edges are identified in pairs.
We can compute $n$ and $x$, and hence $|E|$, in logspace by
counting the number of $\#$ and $\emptyset$ symbols on the input
tape.\footnote{Note that addition and division by two are both
logspace computable.}

It remains to compute $|V|$. We do this by tracking the identifications
of individual vertices of triangles. For this we construct an
undirected graph $K$,
which we call the \emph{vertex identification graph}, as follows.
The graph $K$ has vertex set
\[W=\{w_{t,1}, w_{t,2}, w_{t,3}\mid t\in[n]\},\]
where $w_{t,i}$ represents vertex~$i$ of triangle~$t$.
Note that the graph $K$ has $|W|=3n$ vertices overall.
In the punctured Klein bottle from Example~\ref{eg:Klein}, these
vertices are
\begin{align*}
W = \{ &
w_{1,1},\ 
w_{1,2},\ 
w_{1,3},\ 
w_{2,1},\ 
w_{2,2},\ 
w_{2,3},\ 
w_{3,1},\ 
w_{3,2},\ 
w_{3,3}\}.
\end{align*}

The edge set of the graph $K$ is
$F=\{(w_{t,i},w_{s,j}) \mid \mbox{$w_{t,i},w_{s,j}$ are identified
directly}\}$,
where by ``identified directly'' we mean that some edge triangle $t$
is glued to some edge of triangle $s$ in a way that maps vertex~$i$
of triangle~$t$ to vertex~$j$ of triangle~$s$.
For the punctured Klein bottle example, this edge set is
\begin{align*}
F = \{
    &\{w_{1,1}, w_{2,1}\},\ 
    \{w_{1,2}, w_{2,3}\},\\
    &\{w_{2,1}, w_{3,1}\},\ 
    \{w_{2,2}, w_{3,3}\},\\
    &\{w_{1,1}, w_{3,2}\},\ 
    \{w_{1,3}, w_{3,3}\},\\
    &\{w_{1,2}, w_{3,1}\},\ 
    \{w_{1,3}, w_{3,2}\}\}.
\end{align*}

Algorithm~\ref{alg:identificationgraph} constructs this graph in
logspace, simply by walking through the gluing table for the input
triangulation.  Note that, as it is presented here,
Algorithm~\ref{alg:identificationgraph} writes each edge to the output
tape twice; if desired this can easily be avoided using a lexicographical test.

\begin{algorithm}[h]
\caption{Construct the vertex identification graph $K=(W,F)$.}
\label{alg:identificationgraph}
\SetLine
\KwIn{Triangulation data on input tape.}
\KwOut{The graph $K=(W,F)$.}
\For{$t\in[n]$}{
      \For{$i\in\{1,2,3\}$}{
        Write $w_{t,i}$ to the output tape\;
	}
	}
\For{$t\in[n]$}{
   \For{$e=(ij)\in\{(12),(23),(31)\}$}{
      Read the entry $y=(s,(pq))$ in row $t$ column $e$\;
      If $y \ne \emptyset$, write $(w_{t,i}, w_{s,p})$ and $(w_{t,j}, w_{s,q})$ to the output tape\;
      }
   }
\end{algorithm}

Two vertices of $K$ are in the same connected component of $K$
if and only if the corresponding triangle vertices are identified in the
input triangulation, and so $|V|$ is the number of connected components
of the graph $K$.
Algorithm~\ref{alg:connected} with input $K=(W,F)$ computes this
number, and from this we can now compute the Euler characteristic
$\chi(S)$ of the input surface.


\subsection{Number of boundary components}

To count the number of boundary components in our surface,
we build another auxiliary graph $K'$, which we call the
\emph{boundary identification graph}.
This begins with the vertex identification graph $K$, and
introduces additional edges that join together different paths in $K$
that correspond to vertices on the same boundary component of the surface.

More precisely, this graph $K'$ has vertex set $W'=W$ as described
above.  The edge set of $K'$ is
\[ F'= F \cup \{(w_{t,i},w_{t,j}) \mid \mbox{edge $(ij)$ of triangle $t$
is not glued to anything}\}.\]
For the punctured Klein bottle example, this edge set is
\begin{align*}
F' = \{
    &\{w_{1,1}, w_{2,1}\},\ 
    \{w_{1,2}, w_{2,3}\},\\
    &\{w_{2,1}, w_{3,1}\},\ 
    \{w_{2,2}, w_{3,3}\},\\
    &\{w_{1,1}, w_{3,2}\},\ 
    \{w_{1,3}, w_{3,3}\},\\
    &\{w_{1,2}, w_{3,1}\},\ 
    \{w_{1,3}, w_{3,2}\},\\
    &\{w_{2,2}, w_{2,3}\}\}.
\end{align*}
Algorithm~\ref{alg:bdryidentificationgraph} shows how the graph $K'$ is
constructed.

\begin{algorithm}[h]
\caption{Construct the boundary identification graph $K'=(W',F')$.}
\label{alg:bdryidentificationgraph}
\SetLine
\KwIn{Triangulation data on input tape.}
\KwOut{The graph $K'=(W',F')$.}
\For{$t\in[n]$}{
      \For{$i\in\{1,2,3\}$}{
        Write $w_{t,i}$ to the output tape\;
	}
	}
\For{$t\in[n]$}{
   \For{$e=(ij)\in\{(12),(23),(31)\}$}{
      Read the entry $y=(s,(pq))$ in row $t$ column $e$\;
      If $y \ne \emptyset$, write $(w_{t,i}, w_{s,p})$ and $(w_{t,j}, w_{s,q})$ to the output tape\;
      If $y = \emptyset$, write $(w_{t,i}, w_{t,j})$ to the output tape\;
      }
   }
\end{algorithm}

We can analyse the structure of the vertex identification graph $K$
and the boundary identification graph $K'$:
\begin{itemize}
    \item $K$ is a disjoint union of cycles and paths,
    with one cycle for each internal vertex of the surface,
    and one path for each boundary vertex of the surface.
    \item $K'$ is a disjoint union of cycles,
    with one cycle for each internal vertex of the surface,
    and one cycle for each boundary component of the surface.
\end{itemize}
Moreover, the number of boundary vertices of the surface is equal to the
number of boundary edges; that is, the number of $\emptyset$ symbols in
the gluing table for the input triangulation.
Therefore counting boundary components becomes a simple matter of
counting boundary edges and counting components of $K$ and $K'$.
Algorithm~\ref{alg:boundary} gives the details.

\begin{algorithm}[h]
\caption{Count the number of boundary components.}
\label{alg:boundary}
\SetLine
\KwIn{Triangulation data on input tape.}
\KwOut{Number of boundary components, $b$.}
Compose Algorithms~\ref{alg:identificationgraph}
and~\ref{alg:connected} to find $k$, the number of
connected components of $K$\;
Compose Algorithms~\ref{alg:bdryidentificationgraph}
and~\ref{alg:connected} to find $k'$, the number of
connected components of $K'$\;
Count the number of $\emptyset$ symbols on the input tape,
and store this as the integer $x$\;
Write $b = k'-k+x$ to the output tape\;
\end{algorithm}

We summarise this section in the following statement.
\begin{prop}\label{prop:onecomponent}
There is a logspace algorithm, Algorithm~B, which given a
triangulation of a connected surface $S$ as input, computes $(o,\chi, b)$ where
$o=0$ if $S$  is orientable and $1$ if nonorientable, $\chi=\chi(S)$ is
the Euler characteristic of $S$,
and $b$ is the number of boundary components of $S$.
\end{prop}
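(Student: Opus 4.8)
The plan is to assemble Algorithm~B directly from the components developed in this section, invoking closure under composition (Lemma~\ref{lem:comp}) at each stage. First I would compute $o$: apply Algorithm~\ref{alg:double} to produce the double-cover gluing table, feed its output to Algorithm~A, and set $o=0$ if Algorithm~A returns $2$ and $o=1$ if it returns $1$. The correctness here rests on the standard fact that the orientation double cover of a connected surface is connected exactly when the surface is non-orientable; I would also remark that Algorithm~\ref{alg:double} does indeed implement the orientation double cover, since reversing the orientation convention for the primed copies forces an orientation-reversing gluing loop in $S$ to lift to a path between the two sheets.

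Next I would compute $\chi$. Following \S\ref{subsec:euler}, count the number $n$ of $\#$ symbols and the number $x$ of $\emptyset$ symbols on the input tape (both logspace, as is the arithmetic $|E|=(3n+x)/2$). Then compose Algorithm~\ref{alg:identificationgraph} with Algorithm~\ref{alg:connected} to obtain $|V|$ as the number of connected components of the vertex identification graph $K$, and output $\chi=|V|-|E|+n$. Correctness is the observation that two triangle-corners are glued together in $S$ if and only if they lie in the same component of $K$: one direction is immediate from the definition of $F$, and the other follows because any chain of direct corner-identifications is exactly an edge path in $K$.

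For $b$ I would simply invoke Algorithm~\ref{alg:boundary}, which composes Algorithms~\ref{alg:identificationgraph}, \ref{alg:bdryidentificationgraph} and \ref{alg:connected} and returns $k'-k+x$. The substantive point to justify is the structural dichotomy stated just before Algorithm~\ref{alg:boundary}: that $K$ is a disjoint union of cycles (one per interior vertex of $S$) and paths (one per boundary corner), while $K'$ replaces each such path by a cycle running around the corresponding boundary circle. This is where the real work lies. I would argue it by a local link analysis: around any vertex of $S$, the incident triangle-corners are cyclically or linearly ordered by the edges emanating from that vertex, with each interior edge contributing one edge of $K$ between consecutive corners and each boundary edge contributing a loose end; an interior vertex closes up into a cycle while a boundary vertex yields a path whose two endpoints are corners sitting on boundary edges. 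The extra edges of $K'$ join precisely the two corners of each boundary edge, and walking alternately along boundary-edge links (paths of $K$) and across boundary edges (the new edges) traces out each boundary circle, so each boundary component of $S$ corresponds to one cycle of $K'$. Since $K$ has $k$ components (one per interior vertex plus one per boundary corner) and $K'$ has $k'$ components (one per interior vertex plus one per boundary component), and the number of boundary corners equals the number of boundary edges $x$, we get (number of boundary components) $=k' - (\text{interior vertices}) = k' - (k - x) = k'-k+x$, as claimed.

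Finally I would note that each of the three quantities $o,\chi,b$ is produced by a finite composition of logspace transducers together with constant-sized arithmetic and comparisons, hence is logspace computable by Lemma~\ref{lem:comp}; running the three in turn and concatenating their outputs gives the single logspace algorithm Algorithm~B. I expect the main obstacle to be the careful verification of the cycles-and-paths structure of $K$ and $K'$ — in particular making the local link argument precise enough to handle vertices whose link is a union of several arcs (which happens when the triangulation is not a simplicial complex but only a $\Delta$-complex), since then a single vertex of $S$ can correspond to several cycles or paths in $K$ rather than one; this would need the statement of the dichotomy to be phrased in terms of link-arcs rather than vertices, or else a reduction to the case of a genuine triangulation.
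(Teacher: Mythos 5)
Your proposal is correct and follows the paper's own approach essentially line for line: orientability via the double cover (Algorithm~\ref{alg:double} composed with Algorithm~A), Euler characteristic via counting components of the vertex identification graph $K$ to get $|V|$, and boundary components via Algorithm~\ref{alg:boundary}'s formula $b=k'-k+x$. The one worry you flag at the end is in fact not an obstacle: once the input is assumed to be (or has been verified to be) a surface, every vertex of $S$ has a link that is a single circle or a single arc, even in a $\Delta$-complex where a triangle may meet a vertex at several corners, so the components of $K$ really are in bijection with the vertices of $S$ and the cycles-versus-paths dichotomy holds as stated.
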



\section{More than one connected component}\label{sec:moreconn}

We now assume the output to Algorithm~\ref{alg:connected} is $c>1$.
We will compute the following data:
 \[(o_1,b_1,\chi_1), \dots, (o_c,b_c,\chi_c), \]
where  $o_i=0$ if the $i$-th connected component is  orientable and 1 otherwise,
$b_i$ is its number of boundary components, and
$\chi_i$ is its Euler characteristic.
Moreover, we output this data in the following order:
\begin{itemize}
\item $o_i<o_{i+1}$, or 
\item $o_i=o_{i+1}$ and $\chi_i< \chi_{i+1}$, or
\item $o_i=o_{i+1},\chi_i =\chi_{i+1}$, and $b_i\leq b_{i+1}$.
\end{itemize}

The pseudocode is shown below, and followed by a discussion of the meta-algorithm.

\begin{algorithm}
\caption{Outputs the triangulation data for the $i$-th connected component only.}
\label{alg:singlecomponent}
\SetLine
\KwIn{Triangulation data on input tape with $n$ triangles; integer $i\leq n$}
\KwOut{Triangulation data for the connected surface which is the $i$-th connected component of the input surface.}
Initialise counters $t=1$ and $c=1$ (in binary) on the work tape\;
\While{$c < i$}{
    Increment $t$ by 1\;
    Set $b={\tt false}$\;
    \For{$s=1 \,\, {\tt to} \,\, t-1$}{
         If $\Ralg([n], E, s, t)$ returns ${\tt true}$, set $b={\tt true}$\;
    }
    If  $b={\tt false}$, increment $c$ by 1\;
}
\For{$s=t \,\, {\tt to} \,\, n$}{
        \If{$\Ralg([n], E, s, t)$ returns ${\tt true}$}{
        Write $\#$ to the output tape\;
       \For{$e\in\{(12),(23),(31)\}$}{
          Read the entry $y=(u,f)$ in row $s$ column $e$\;
          \If{$y = \emptyset$}{
            Write $\emptyset$ to the output tape\;
          }\Else{
            Initialise counter $u'=0$\;
            \For{$x=t \,\, {\tt to} \,\, s$}{
              If $\Ralg([n],E,x,t)$ returns ${\tt true}$ then increment $u'$ by $1$\;
            }
            Write $(u',f)$ to the output tape\;
          }
          }
       }
}
\end{algorithm}

Here is the meta-algorithm. Assume we have checked that the input is a
surface, computed the number of triangles $n$ and counted the number
of connected components
$c>1$.  Then using the algorithms described above, we  compute the number
of   boundary components $b$ and Euler characteristic $\chi(S)$ for the
entire (disconnected) surface $S$.  Note that the number of  boundary
components (and connected components) is at most $n$. 

The Euler characteristic for a connected surface is at most 2.   We
compute a lower bound on $\chi$ for each connected component as follows.
If $S=\cup_{i=1}^c S_i$ are the
connected components, we have $\chi(S)=\sum_{i=1}^c \chi(S_i)$ so for
one component we have $\chi(S_{i_0})=\chi(S)-\sum_{i\neq i_0}
\chi(S_i)$. This is minimised when the negative term on the left is
maximised, and since the maximum Euler characteristic for any connected
surface is 2, we have $\chi(S_{i_0})\geq \chi(S)-2(c-1)$.

We then enumerate through all possible triples $(o, x,\chi)$ for
$o\in\{0,1\}, 0\leq x\leq b , \chi(S)-2(c-1)\leq \chi\leq 2$ (which is a
finite list), in the order given above. 
For each such triple $(o, \chi, x)$, we run through each connected
component of $S$ and compute $(o_i,\chi_i,b_i)$, and if
$(o_i,\chi_i,b_i)=(o,\chi,x)$ then we write this triple to the output.

This meta-algorithm repeatedly uses Algorithms~A and~B above.
It also requires a logspace algorithm for extracting the $i$-th connected
components of the input surface.  We present such a procedure in
Algorithm~\ref{alg:singlecomponent} which takes as input an integer $i$
and triangulation data for a surface with possibly many connected
components, and outputs the triangulation data for only the $i$-th
connected component.

Algorithm~\ref{alg:singlecomponent} is a straightforward extension of
Algorithm~\ref{alg:connected} (which just counts connected components).
We draw attention to the final loop over the counter $x$, which is used
to reindex the triangles on the output tape so that they are numbered
consecutively as $1,2,\ldots,k$, where $k$ is the number of triangles in
the $i$-th component.


\section{Concluding remarks}

The main result of this paper is in fact stronger than presented in
the statement of Theorem~\ref{Main} (that 2-manifold recognition is in L):
the proof gives a logspace algorithm for the
\emph{function problem} to compute the homeomorphism type (essentially
a ``normal form'') of a given 2-manifold.
This is in contrast to problems on some groups, such as braid
groups with at least four strands, where there is a logspace solution
to the word problem \cite{MR1815219, MR1888796, MR0445901} but no known logspace algorithm  for computing
a normal form.

The only essential use of Reingold's
$s,t$ connectivity algorithm in our work is in counting components of the
surface, and testing orientability.
In particular, the auxiliary graphs $K$ and $K',$ which we use to
compute Euler characteristic and count boundary components, have a very
simple structure, and it is straightforward (but a little messier)
to design custom logspace algorithms for counting their components
that do not rely on $\Ralg$ as an oracle.
This raises the possibility that for connected and orientable
surfaces, homeomorphism testing may be even simpler---for instance,
an $\mathrm{NC}^1$ solution might be possible.


\bibliographystyle{plain}
\bibliography{refs}

\def\cprime{$'$} \def\cprime{$'$}
\begin{thebibliography}{10}

\bibitem{Ad57b}
Serge\u{\i}~I. Adyan.
\newblock Finitely presented groups and algorithms.
\newblock {\em Dokl. Akad. Nauk SSSR (N.S.)}, 117:9--12, 1957.

\bibitem{Ad57a}
Serge\u{\i}~I. Adyan.
\newblock Unsolvability of some algorithmic problems in the theory of groups.
\newblock {\em Trudy Moskov. Mat. Ob\v s\v c.}, 6:231--298, 1957.

\bibitem{AFW-survey}
Matthias Aschenbrenner, Stefan Friedl, and Henry Wilton.
\newblock Decision problems for 3-manifolds and their fundamental groups, 2014.
\newblock http://arxiv.org/abs/1405.6274.

\bibitem{MR1815219}
Stephen~J. Bigelow.
\newblock Braid groups are linear.
\newblock {\em J. Amer. Math. Soc.}, 14(2):471--486 (electronic), 2001.

\bibitem{brahana-1921}
Henry~R. Brahana.
\newblock Systems of circuits on two-dimensional manifolds.
\newblock {\em Ann. of Math. (2)}, 23(2):144--168, 1921.

\bibitem{Regina}
Benjamin~A. Burton, Ryan Budney, William Pettersson, et~al.
\newblock Regina: Software for 3-manifold topology and normal surface theory.
\newblock http://regina.sourceforge.net/, 1999--2014.

\bibitem{EEO}
Murray Elder, Gillian Elston, and Gretchen Ostheimer.
\newblock On groups that have normal forms computable in logspace.
\newblock {\em J. Algebra}, 381:260--281, 2013.

\bibitem{francis-ZIP}
George~K. Francis and Jeffrey~R. Weeks.
\newblock Conway's {ZIP} proof.
\newblock {\em Amer. Math. Monthly}, 106(5):393--399, 1999.

\bibitem{HLP}
Joel Hass, Jeffrey~C. Lagarias, and Nicholas Pippenger.
\newblock The computational complexity of knot and link problems.
\newblock {\em J. ACM}, 46(2):185--211, 1999.

\bibitem{hatcher-surfaces}
Allen Hatcher.
\newblock The {K}irby torus trick for surfaces, 2013.
\newblock http://arxiv.org/abs/1312.3518.

\bibitem{MR1888796}
Daan Krammer.
\newblock Braid groups are linear.
\newblock {\em Ann. of Math. (2)}, 155(1):131--156, 2002.

\bibitem{MR0445901}
Richard~J. Lipton and Yechezkel Zalcstein.
\newblock Word problems solvable in logspace.
\newblock {\em J. Assoc. Comput. Mach.}, 24(3):522--526, 1977.

\bibitem{manolescu}
Ciprian Manolescu.
\newblock $pin(2)$-equivariant {S}eiberg-{W}itten {F}loer homology and the
  {T}riangulation {C}onjecture, 2014.
\newblock http://arxiv.org/abs/1303.2354.

\bibitem{markov}
A.~Markov.
\newblock The insolubility of the problem of homeomorphy.
\newblock {\em Dokl. Akad. Nauk SSSR}, 121:218--220, 1958.

\bibitem{moise}
Edwin~E. Moise.
\newblock Affine structures in {$3$}-manifolds. {V}. {T}he triangulation
  theorem and {H}auptvermutung.
\newblock {\em Ann. of Math. (2)}, 56:96--114, 1952.

\bibitem{Rab58}
Michael~O. Rabin.
\newblock Recursive unsolvability of group theoretic problems.
\newblock {\em Ann. of Math. (2)}, 67:172--194, 1958.

\bibitem{rado-triangulation}
Tibor Rad{\'o}.
\newblock {\"U}ber den {B}egriff der {R}iemannschen {F}l\"ache.
\newblock {\em Acta Scientarum Mathematicarum Universitatis Szegediensis},
  2:101--121, (1924--26).

\bibitem{Rein}
Omer Reingold.
\newblock Undirected {ST}-connectivity in log-space.
\newblock In {\em S{TOC}'05: {P}roceedings of the 37th {A}nnual {ACM}
  {S}ymposium on {T}heory of {C}omputing}, pages 376--385. ACM, New York, 2005.

\bibitem{schleimer}
Saul Schleimer.
\newblock Sphere recognition lies in {NP}.
\newblock In {\em Low-dimensional and symplectic topology}, volume~82 of {\em
  Proc. Sympos. Pure Math.}, pages 183--213. Amer. Math. Soc., Providence, RI,
  2011.

\end{thebibliography}

\end{document}